\documentclass[11pt,a4paper,twoside]{amsart}
\usepackage[T1]{fontenc}
\usepackage[utf8]{inputenc}
\usepackage[english]{babel}
\usepackage{amsmath}
\usepackage{amsthm}
\usepackage{amssymb}
\usepackage{amsfonts}
\usepackage{amsxtra}
\usepackage{enumerate}
\usepackage{verbatim}
\usepackage{color}
\usepackage[mathscr]{eucal}
\usepackage{graphicx}
\usepackage{xcolor}
\usepackage[backref=page]{hyperref}
\usepackage{tikz}
\usepackage{enumitem}
\usepackage{parskip}
\usepackage{dirtytalk}

\newcommand{\Z}{\mathbb Z}

\newcommand{\R}{\mathbb R}

\newtheorem{theorem}{Theorem}[section]

\newtheorem{lemma}[theorem]{Lemma}

\newtheorem{remark}[theorem]{Remark}

\theoremstyle{definition}
\newtheorem{exmp}[theorem]{Example}

\let\phi=\varphi

\newcommand{\rem}[1]{}

\DeclareFontFamily{U}{mathb}{\hyphenchar\font45}
\DeclareFontShape{U}{mathb}{m}{n}{
<-6> mathb5 <6-7> mathb6 <7-8> mathb7
<8-9> mathb8 <9-10> mathb9
<10-12> mathb10 <12-> mathb12
}{}
\DeclareSymbolFont{mathb}{U}{mathb}{m}{n}
\DeclareMathSymbol{\llcurly}{\mathrel}{mathb}{"CE}
\DeclareMathSymbol{\ggcurly}{\mathrel}{mathb}{"CF}

\title{A non-orderable overtwisted contact structure on the sphere}

\author{Jakob Hedicke}
\address{ Radboud Universiteit , Heyendaalseweg 135, 6525 AJ Nijmegen, The Netherlands} 
\email{jakob.hedicke@gmail.com}

\date{\today}
\begin{document}

\begin{abstract}
We show that the overtwisted contact structure on $S^3$ with Hopf invariant $-1$ is non-orderable, i.e., that it admits a contractible positive loop of contactopmorphisms.
This provides the first known example of a non-orderable overtwisted contact manifold.
We further show the existence of a contactomorphism without translated points.
\end{abstract} 

\maketitle

\section{Introduction}

In their seminal paper \cite{Eliashberg00} Eliashberg and Polterovich introduced the notion of orderability for contact manifolds.
They call a closed cooriented contact manifold $(M,\zeta)$ orderable, if the universal cover of the identity component of the group of contactomorphisms $\widetilde{\mathcal{G}}:=\widetilde{\mathrm{Cont}}_0(M,\zeta)$ admits a bi-invariant partial order which is defined using paths of contactomorphisms positively transverse to the contact distribution.
A long-standing open problem in contact topology is the question whether overtwisted contact structures are orderable or not \cite[Section 1.9]{Eliashberg00}.
Even though there has been a lot of evidence that overtwisted contact manifolds are not orderable \cite{Borman15, Casals16, Liu20}, there has not been a single example in the literature of an overtwisted contact manifold that is known to be either orderable or non-orderable.

In this note we provide the first example of an overtwisted contact manifold that is non-orderable.

Combining the methods developed in \cite{Hedicke24} with results about Legendrian knots in $S^3$ \cite{Dymara01, Eliashberg09, Etnyre13, Vogel18}, we construct the following example.
Recall that overtwisted contact structures on $S^3$ are classified by their Hopf invariant $H\in \Z$, see e.g. \cite[Section 5]{Geiges24} for a detailed review.
Throughout this paper, $\xi$ will denote the overtwisted contact structure with Hopf invariant $-1$ (with respect to the quaternionic frame).
This contact structure can be obtained from the standard tight contact structure by performing a Lutz-twist around a fibre of the Hopf fibration.

\begin{theorem}\label{thmmain}
The contact manifold $(S^3,\xi)$ is non-orderable, i.e., it admits a contractible positive loop of contactomorphims.
\end{theorem}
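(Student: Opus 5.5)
The plan is to use the criterion from \cite{Hedicke24}: a closed contact manifold is non-orderable as soon as one can find a Legendrian (or pre-Lagrangian/Legendrian-fibred) configuration that can be displaced from itself by a contact isotopy in a way compatible with the Reeb-type positive direction. Concretely, I would use the approach of \cite{Hedicke24} as follows. Realise $\xi$ as the Lutz twist of the standard tight structure along a Hopf fibre $F$. The complement of a neighbourhood of $F$ is a solid torus $V$ on which $\xi$ agrees with $\xi_{std}$, and on $V$ the Hopf flow is the Reeb flow, hence a positive loop. The goal is to extend a positive contact Hamiltonian defined near the Hopf fibres in $V$ to a positive loop on all of $S^3$ and show that the resulting loop is contractible in $\mathrm{Cont}_0(S^3,\xi)$.

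The key steps, in order, are these. (1) Write $\xi$ in coordinates as $\ker(\cos r\,d\theta_1+\sin r\,d\theta_2)$-type forms on the torus fibration of $S^3$, with $r$ running through an extra half (or full) twist near $F$. (2) Observe that the Hopf action $(\theta_1,\theta_2)\mapsto(\theta_1+t,\theta_2+t)$ preserves $\xi$. Its contact Hamiltonian is $\alpha(\partial_{\theta_1}+\partial_{\theta_2})=\cos r+\sin r$ (suitably normalised), which is positive except on the tori where the twisting makes it negative. (3) Compose this with a second torus action, for instance rotation in $\theta_2$ alone, chosen so that the combined Hamiltonian is positive on the twisted region. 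This gives a loop $\phi_t$ of contactomorphisms generated by a $T^2$-invariant Hamiltonian that is positive everywhere. The Lutz twist forces the characteristic direction to rotate by more than $\pi$, and the Hopf invariant $-1$ is exactly what is needed to make a single integral slope $(p,q)$ work. (4) The loop is given by an $S^1$-subgroup of the $T^2$-action, so in $\pi_1(SO(4))$, and hence in $\pi_1(\mathrm{Diff}(S^3))$, it is trivial or of order two. Replacing it by its square gives a loop that is contractible among diffeomorphisms.

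The main obstacle is contractibility in $\mathrm{Cont}_0(S^3,\xi)$ itself, rather than in $\mathrm{Diff}$. Here I would use the fibration $\mathrm{Cont}(S^3,\xi)\to\mathrm{Diff}(S^3)\to\{\text{structures isotopic to }\xi\}$. The homotopy of $\pi_1$ reduces to showing that the corresponding element of $\pi_2$ of the space of contact structures, or the loop of Legendrian/transverse knots (the orbit of a Legendrian fibre under $\phi_t$), is trivial. For this I would invoke the classification results on Legendrian knots in $S^3$ and their isotopy spaces \cite{Dymara01, Eliashberg09, Etnyre13, Vogel18}. These show that loops of the relevant Legendrian unknots (with the overtwisted disc in the complement) are contractible, and this is precisely the input the criterion of \cite{Hedicke24} requires. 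If the direct $T^2$ construction fails to be globally positive, the fallback is to build only a positive path from the identity to a contactomorphism supported-displacing a Legendrian unknot. From that, one would produce the contractible positive loop via the Legendrian-isotopy argument of \cite{Hedicke24}.
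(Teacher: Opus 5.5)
\textbf{The main route fails at step (3).} In the $T^2$-invariant model $\alpha=\sin(\tfrac{3\pi}{2}r)\,dx_1+\cos(\tfrac{3\pi}{2}r)\,dx_2$ of $\xi$, the circle subgroup generated by $p\,\partial_{x_1}+q\,\partial_{x_2}$ has contact Hamiltonian $p\sin\vartheta+q\cos\vartheta$, where $\vartheta=\tfrac{3\pi}{2}r$ ranges over $[0,\tfrac{3\pi}{2}]$. For $(p,q)\neq(0,0)$ this function is $\leq 0$ on a closed half-circle of angles. Every closed half-circle meets an arc of length $\tfrac{3\pi}{2}$, so no circle subgroup of $T^2$ is a positive loop.

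Your own remark, that the characteristic direction turns by more than $\pi$, is exactly the obstruction, not the mechanism. No other model repairs this. A circle action with positive Hamiltonian $H$ is the Reeb flow of $\alpha/H$. A contact form whose Reeb orbits are the fibres of a circle action makes $\xi$ transverse to a Seifert fibration, and such structures are fillable, hence tight.

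\textbf{Step (4) is not an argument either.} Triviality in $\pi_1(SO(4))$ says nothing about $\mathrm{Cont}$. On the tight sphere, the Hopf loop is positive and trivial in $\pi_1(SO(4))$, yet it is nonzero in $\pi_1\mathrm{Cont}(S^3,\xi_{\mathrm{std}})\cong\pi_1U(2)\cong\Z$. The classification results of Dymara, Eliashberg--Fraser, Etnyre and Vogel concern Legendrian isotopy classes of unknots. They do not say that loops of unknots are contractible, and that is not the input Hedicke's criterion needs.

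\textbf{The fallback is where the proof actually lives, but you omit every ingredient.} Theorem 1.10 of \cite{Hedicke24} needs three things: a supporting open book, the skeleton $\Lambda_0$ of a page, and a contactomorphism $f$ (no positivity required) displacing $\Lambda_0$ from its Reeb saturation $\bigcup_t\varphi^\alpha_t(\Lambda_0)$. Displacing $\Lambda_0$ from itself is trivial and is not the condition. Once the condition holds, the criterion produces the contractible positive loop by itself, so no separate analysis of $\pi_1\mathrm{Cont}$ is needed.

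The paper proceeds as follows.
\begin{enumerate}
\item It takes the annulus open book with negative Dehn twist monodromy and the adapted form $C\,d\theta+\mu(\theta)\lambda+(1-\mu(\theta))F^*\lambda$.
\item The skeleton is the core circle $\{s=0\}$ of a page. It is a non-loose unknot with $\mathrm{tb}=1$, $\mathrm{rot}=0$; the contraction onto a second skeleton rules out overtwisted discs in its complement.
\item Its Reeb saturation is the pre-Lagrangian torus $T=\{h=0\}$, where $h=\alpha(X)$ and $X$ is the Hopf vector field.
\item The displacement is where the knot classification actually enters. Take a non-loose unknot $K'$ with $\mathrm{tb}=2$, $\mathrm{rot}=1$ lying on a level set of $h$ other than $T$, and stabilise it negatively inside a Darboux ball disjoint from $T$.
\item By Etnyre's and Vogel's classification, the result is a non-loose unknot with $\mathrm{tb}=1$, $\mathrm{rot}=0$. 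It is therefore Legendrian isotopic to $\Lambda_0$ up to orientation.
\item Extending this Legendrian isotopy to an ambient contact isotopy gives $f$ with $f(\Lambda_0)\cap T=\emptyset$.
\end{enumerate}
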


\begin{remark}
\begin{enumerate}
\item We expect that a combination of the results in \cite{Hedicke24} with methods from \cite{Borman15} can be used to prove non-orderability for more general classes of overtwisted contact manifolds.
This will be part of a future work \cite{Hedickefuture}.
\item Similar knot theoretic methods as the ones used to prove Theorem \ref{thmmain} have been developed for certain overtwisted Lens spaces \cite{Geiges15, Rima25} and might be used to obtain further non-orderable examples in these cases in the same way.
\end{enumerate}
\end{remark}

Recall that a \textit{translated point} of a contactomorphism $\varphi\colon (M,\zeta)\rightarrow (M,\zeta)$ with respect to a contact form $\alpha$ is a point $x\in M$ such $(\varphi^{\ast}\alpha)_x=\alpha_x$ and $\varphi(x)=\varphi_t^{\alpha}(x)$ for some $t\in\R$ and the Reeb flow $\varphi_t^{\alpha}$ of $\alpha$, see \cite{Sandon12}.
Recent work seems to suggest that there is a strong connection between (non-) orderability and the (non-) existence of translated points, see e.g. \cite{Albers15, Cant22, Cant25, Hedicke24}.
In view of Theorem \ref{thmmain}, the following result reinforces this impression.

\begin{theorem}\label{thm_translated}
There exists a contact form $\alpha$ for $(S^3,\xi)$ and a contactomorphism $\varphi$ that has no translated points with respect to $\alpha$.
\end{theorem}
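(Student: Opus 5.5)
We take $\varphi$ to be the time-one map of a positive contact isotopy built from the construction behind Theorem \ref{thmmain}, and $\alpha$ a contact form adapted to this isotopy. For a positive contact Hamiltonian $h_t$, the translated points of $\varphi=\phi_1^h$ with respect to $\alpha$ can be understood through discriminant points. Write $\varphi_t^\alpha$ for the Reeb flow of $\alpha$. A translated point is a discriminant point of $\varphi_{-s}^{\alpha}\circ\varphi$ for some $s\in\R$, that is, a point $x$ with $\varphi_{-s}^\alpha(\varphi(x))=x$ and $((\varphi_{-s}^{\alpha}\circ\varphi)^\ast\alpha)_x=\alpha_x$.

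The natural strategy, following \cite{Hedicke24}, runs as follows.

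First, we use the construction in the proof of Theorem \ref{thmmain} to produce a positive loop $\{\phi_t\}_{t\in S^1}$ that is "small" in a strong sense. We want it to be generated by a contact Hamiltonian $h_t$ which, for a suitable contact form $\alpha$, is as close as we like to a constant function. The cleanest version would be to take $\alpha$ so that the positive loop itself is the Reeb flow of $\alpha$, with all Reeb orbits closed of a common period. The model situation is the Hopf-fibration-like picture: $\xi$ is obtained by a Lutz twist along a Hopf fibre, and the positive loop is presumably constructed fibrewise.

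Second, we set $\varphi:=\phi_{\tau}$ for a time $\tau$ at which $\varphi$ moves every point off its own Reeb trajectory segment. We compute translated points by writing $\varphi^{-1}\circ\varphi^\alpha_s$ as the time-one map of a concatenated path $\{\varphi^{-1}\circ\varphi^\alpha_{rs}\}$. A translated point then gives a discriminant point of a path of the form (Reeb flow)$\circ$(reverse of a partial positive loop). If this path can be isotoped, via the contractibility and positivity of the loop, to a path that is everywhere positive or everywhere negative, we obtain a contradiction. This contradiction holds provided the positive loop is generated by a Hamiltonian $h$ with $\min h$ and $\max h$ suitably separated relative to the Reeb period. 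In practice, the criterion from \cite{Hedicke24} (existence of a contractible positive loop with a specific bound on the Hamiltonian relative to $\alpha$) directly yields a contactomorphism without translated points. Concretely, we take $\alpha$ with $\alpha(\partial_t\phi_t)\equiv 1$ along the loop, i.e.\ $\alpha:=\lambda/\lambda(X_t)$ averaged suitably. The aim is that $\phi_t$ becomes the Reeb flow of $\alpha$. We then choose $\varphi$ as a small non-Reeb perturbation whose conjugation by the loop avoids fixed points of $\varphi^{-1}\varphi^\alpha_s$ for all $s$.

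The main obstacle is the second step. We need a quantitative, non-autonomous version of positivity ensuring that $\varphi^{-1}\circ\varphi^\alpha_s$ has no discriminant points for any $s\in\R$, not just for $s$ in a bounded range. For the unbounded range we will exploit periodicity: if the positive loop is the Reeb flow (all orbits closed with common period $T$), then $s$ reduces modulo $T$. For $s\in[0,T]$, we construct $\varphi$ as a contactomorphism generated by a Hamiltonian $g$ that is nonvanishing relative to the Reeb field at every point. Lifting to the universal cover, $\varphi^\alpha_s$ for $s\in[0,T]$ is then squeezed strictly between $\varphi$ and $\varphi\circ\phi_T$. This is where contractibility of the loop is used, to identify $\phi_T$ with the identity in $\widetilde{\mathcal G}$ up to a strictly positive shift. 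The expected difficulty is making the contact form whose Reeb flow is periodic compatible with the overtwisted structure $\xi$. If such a form cannot be arranged, we instead use the estimate from \cite{Hedicke24} for a non-autonomous positive contractible loop, choosing $\alpha$ so that the loop's Hamiltonian is pinched between two constants.
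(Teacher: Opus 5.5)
Your proposal has a genuine gap: it never supplies a mechanism that actually excludes translated points, and both of your routes break down.

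\textbf{The Reeb-flow route.} Taking $\alpha$ so that the positive loop is its Reeb flow with all orbits closed is not available. A contact form all of whose Reeb orbits are closed makes $\xi$ transverse to a Seifert fibration, and this forces $\xi$ to be tight. The fibrewise Hopf action you have in mind is not even positive for $\xi$, since $h=\alpha(X)$ vanishes on $T$ and changes sign there.

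Even granting such a form, $\varphi=\phi_\tau=\varphi^\alpha_\tau$ has \emph{every} point as a translated point. A $C^1$-small non-Reeb perturbation cannot cure this. The translated points of $\varphi$ are exactly those of $\varphi^\alpha_{-\tau}\circ\varphi$, because the Reeb flow preserves $\alpha$ and commutes with itself. That map is $C^1$-close to the identity, so its graph in the contact product is a $1$-jet graph $j^1F$ in a Weinstein neighbourhood of the diagonal. Its translated points then correspond to the critical points of a function $F$ on $S^3$, of which there are at least two.

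\textbf{The order-theoretic route.} Squeezing $\varphi^\alpha_s$ between $\varphi$ and $\varphi$ composed with the full loop in $\widetilde{\mathcal G}$ is vacuous here. Since $(S^3,\xi)$ is non-orderable by Theorem \ref{thmmain}, $\tilde f\preccurlyeq\tilde g$ holds for \emph{all} $\tilde f,\tilde g\in\widetilde{\mathcal G}$.

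More basically, such relations only see homotopy classes of paths rel endpoints. A translated point, by contrast, is a fixed point of $\varphi^\alpha_{-s}\circ\varphi$ with conformal factor $1$, which is a pointwise property of the map. Positive paths routinely end at maps with such points, the Reeb flow itself being an example.

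Nor can you invoke a criterion of the form ``contractible positive loop with pinched Hamiltonian $\Rightarrow$ a contactomorphism without translated points''. Every positive loop has Hamiltonian pinched between positive constants by compactness. If such an implication were available, Theorem \ref{thm_translated} would be an immediate corollary of Theorem \ref{thmmain}, whereas the paper needs additional input.

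\textbf{The missing ingredient.} That additional input is the geometric information in Lemma \ref{lem_displace}: the isotopy $f_t$ displaces $\Lambda_0$ from its Reeb-saturation $T$ \emph{while staying in the complement of a second skeleton} $\Lambda_1$. This is more than non-orderability requires. The paper combines $f_1$ with the contact flow of \cite[Proposition 3.5]{Courte18}, \cite[Lemma 2.7]{Hedicke24}, which squeezes compact subsets of the complement of one skeleton into arbitrarily small neighbourhoods of the other. From these it builds the desired $\varphi$ as in \cite{Cant22} and \cite[Theorem 4.4]{Hedicke24}, and no positive loop enters the argument.
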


\subsection*{Outline of the proof of Theorem \ref{thmmain}}
The key idea of the proof is to use \cite[Theorem 1.10]{Hedicke24} which states that a contact manifold $(M,\zeta)$ is non-orderable if it is supported by an open book decomposition such that the skeleton of the page $L$ with respect to some adapted contact form can be displaced  by a contactomorphism from its image under the Reeb flow.
A canonical abstract open book for $(S^3,\xi)$ is given by an annulus page with a negative Dehn twist as monodromy.
In this case the skeleton is a non-loose Legendrian unknot in $S^3$ and its image under the Reeb flow is a pre-Lagrangian torus.
This allows to apply results on the classification of non-loose Legendrian unknots in $S^3$ \cite{Dymara01, Eliashberg09, Etnyre13, Vogel18} in order to find a contactomorphism that displaces $L$.

\subsection*{Acknowledgements}
I thank Dylan Cant, Georgios Dimitroglou Rizell, Yakov Eliashberg, Lukas Nakamura, Stefan Nemirovski, Murat Sa\u{g}lam and Egor Shelukhin for interesting discussions on related topics.
The author is supported by a Radboud Excellence Fellowship.
Part of this research was conducted while the author was visiting the Simons Center for Geometry and Physics for the program \say{Contact geometry, general relativity and thermodynamics}.

\section{Background}
\subsection{Overtwisted contact manifolds}
By a \textit{contact manifold} $(M,\zeta)$ we mean a $(2n+1)$-dimensional manifold $M$ together with a hyperplane bundle $\zeta\subset TM$ that is maximally non-integrable in the sense that $\zeta=\ker\alpha$ for some differential $1$-form satisfying that $\alpha\wedge (d\alpha)^n$ is a volume form.
Here we will focus on the case $n=1$.

A special class of contact manifolds satisfying a certain $h$-principle are \textit{overtwisted} contact manifolds, which where introduced in \cite{Eliashberg89} in dimension $3$ and in \cite{Borman15} for higher dimensions.
Roughly, in the $3$-dimensional case a manifold is called overtwisted, if it contains an embedded disk with Legendrian boundary whose characteristic foliation, i.e., the line bundle induced by the intersection of the contact structure with the tangent space of the disk, has a unique singular point in the interior, see \cite[Fig. 4.10]{Geiges}.
A contact manifold without an overtwisted disk is called tight.
Throughout this paper we will restrict to the closed $3$ dimensional case, where the classification of overtwisted contact structures up to isotopy coincides with the classification of oriented plane fields \cite{Eliashberg89}.

\begin{exmp}
On $S^3$ there exists an integer family of overtisted contact structures labelled by their Hopf invariant, one for each isotopy class of oriented plane fields.
The unique tight contact structure on $S^3$ can be obtained by looking at the kernel of the canonical Liouville form on $\mathbb{C}^2$ restricted to $S^3$.
We will consider the overtwisted contact structure $\xi$ on $S^3$ with Hopf invariant $-1$, that is not isotopic to the standard tight contact structure as an oriented plane field, but can be obtained from it by modifying the standard tight contact form by performing a Lutz twist around a fibre of the Hopf fibration $S^3\to S^2$, \cite[Section 4.3]{Geiges}.
\end{exmp}

\subsection{Non-loose unknots}\label{sec_loose}

\textit{Legendrians} in contact $3$-manifolds are given by knots that are tangent to the contact distribution.
In the overtwisted case a Legendrian knot $K$ is called \textit{loose} if there exists an overtwisted disk in the complement of $K$ and \textit{non-loose} or exceptional if it intersects every overtwisted disk.

Non-loose Legendrian unknots in $(S^3,\xi)$, i.e., non-loose Legendrian knots smoothly isotopic to a Hopf fibre, are classified up to isotopy and orientation by their classical invariants, the \textit{Thurston-Bennequin invariant} $\mathrm{tb}(K)\in\Z$ and the \textit{rotation number} $\mathrm{rot}(K)\in\Z$, see e.g. \cite[Section 3.5]{Geiges} for the precise definitions.

In particular, it was proved by Eliashberg and Fraser \cite{Eliashberg09}, that $\xi$ is the only contact structure on $S^3$ that admits non-loose unknots.
Moreover, they showed that up to contactomorphism non-loose Legendrian unknots are classified by their classical invariants, which take the values $\mathrm{tb}(K)=n$ and $\mathrm{rot}(K)=\pm(n-1)$ for any integer $n\geq 1$.
This result was improved to a classification up to Legendrian isotopy in \cite{Etnyre13, Vogel18}.

Examples of non-loose unknots can be constructed as follows \cite{Dymara01, Eliashberg09}.
Note that the complement of a link of two Hopf fibres in $S^3$ is diffeomorphic to $(0,1)\times T^2$.
Then in coordinates outside of two Hopf fibres $\xi$ is the kernel of the $1$-form
$$\alpha=\sin\left(\frac{3\pi}{2}r\right)dx_1+\cos\left(\frac{3\pi}{2}r\right)dx_2.$$
As pointed out in \cite{Eliashberg09} the torus $\{r=r_0\}$ is foliated by loose Legendrian unknots with Thurston-Bennequin invariant $n$ and rotation number $\pm(n-1)$ if and only if $\tan\left(\frac{3\pi r_0}{2}\right)\in \left\lbrace -n,\frac{-1}{n}\right\rbrace$.

The contact structure $\xi$ is known to be invariant under the $S^1$-action given by the Hopf fibration \cite{Lutz77}.
In these coordinates the Hopf fibration is induced by the vector field $X:=\partial_{x_1}+\partial_{x_2}$.
The function 
$$h(r,x_1,x_2):=\alpha(X)=\sin\left(\frac{3\pi}{2}r\right)+\cos\left(\frac{3\pi}{2}r\right)$$
extends to a smooth $S^1$-invariant function on $S^3$ that was used by Lutz to classify $S^1$-invariant contact forms.
Then the above tori foliated by non-loose unknots with Thurston-Bennequin invariant $n$ are given by 
$$h^{-1}\left(\frac{1-n}{\sqrt{1+n^2}}\right), \ h^{-1}\left(\frac{1-\frac{1}{n}}{\sqrt{1+\frac{1}{n^2}}}\right).$$

Further note that in the complement of the Hopf link, generic Legendrian knots are determined by their front projection $(0,1)\times T^2\rightarrow T^2$, which is a closed curve with transverse double points and cusps.
Any such curve can be lifted to a Legendrian knot in $(S^3,\xi)$.
The front projections of isotopic Legendrians knots are related by isotopies, Reidemeister moves and a change of the front projection that occurs when an isotopy moves through a component of the Hopf link, see \cite[Section 2.5]{Vogel18}.

\subsection{Orderability and open book decompositions}

In \cite{Eliashberg00} Eliashberg and Polterovich introduced the notion of orderability for closed contact manifolds.
Let $(M,\zeta)$ be a closed cooriented contact manifold.
A \textit{contactomorphism} is a diffeomorphism of $M$ preserving $\zeta$.
Consider the universal cover of the group of contactomorphisms $\widetilde{\mathcal{G}}:=\widetilde{\mathrm{Cont}}_0(M,\zeta)$, which can be identified with isotopy classes of paths of contactomorphisms starting at the identity.
This allows to define a conjugation invariant relation on $\widetilde{\mathcal{G}}$ by declaring $\mathrm{id}\preccurlyeq \tilde{f}$ if and only if there exists a path $(f_t)_{t\in [0,1]}$ that is non-negatively transverse to the contact distribution in the sense that for any given positive contact form $\alpha$ we have
$$\alpha\left(\frac{d}{dt}f_t\right)\geq 0.$$
Due to the invariance under conjugation this allows to define a bi-invariant relation $\tilde{f}\preccurlyeq\tilde{g}$ on  $\widetilde{\mathcal{G}}$.
A contact manifold is called \textit{orderable} if this relation is a partial order on $\widetilde{\mathcal{G}}$.
This is equivalent to the existence of a contractible loop of contactomorphisms that is positively transverse to the contact distribution, \cite{Eliashberg00}.

By now orderability is known in many cases, such as for example certain prequantization spaces \cite{Eliashberg00}, hypertight contact manifolds \cite{Albers15} or manifolds with a filling that has non-vanishing symplectic cohomology \cite{colin19}.

Less is known about non-orderable contact manifolds.
So far, all examples contained in the literature are fillable by sub-critical Liouville manifolds \cite{Eliashberg06, Hedicke24}.
On the other hand overtwisted $3$-manifolds (which are never fillable) are the first example of contact manifolds suspected to be non-orderable in \cite[Section 1.9]{Eliashberg00}.
Several partial results indicate that this might be indeed the case.
In \cite{Borman15} and \cite{Liu20} the authors showed that overtwisted contact manifolds are non-orderable with respect to a weaker relation than the one introduced above.
The first examples of possibly non-contractible positive loops of contactomorphisms where constructed by Casals and Presas in \cite{Casals16}.

In this paper we use an approach to non-orderability developed in \cite{Hedicke24} that uses open book decompositions.
An \textit{open book decomposition} is a way to decompose a manifold $M$ into a co-dimension two submanifold $B$ called the binding and a locally trivial fibration $\theta\colon M\setminus B\rightarrow S^1$.
In \cite{Giroux00} Giroux introduced the notion of open book decompositons supporting a contact structure.
Roughly this means that there exists a contact form whose differential restricts to a symplectic form on the pages $\theta^{-1}(p)$ and turns them into Liouville domains, see \cite{Etnyre04}, \cite[Section 4.4.2]{Geiges} for further details.
As in \cite{Hedicke24} we will consider skeletons of the pages with respect to some ideal Giroux form \cite{Courte18} turning them into Liouville manifolds.
To prove the non-orderability of $(S^3,\xi)$ we will use the following result proved in \cite[Theorem 1.10]{Hedicke24}.

\begin{theorem}
Let $(M,\zeta)$ be a closed cooriented contact manifold supported by an open book decompositon $(B,\theta)$.
Fix a page of the open book decomposition with skeleton $\Lambda_0$.
Suppose there exist a contact form $\alpha$ and a contactomorphism $f$ that displaces $\Lambda_0$ from its image under the Reeb flow of $\alpha$.
Then $(M,\zeta)$ is non-orderable.
\end{theorem}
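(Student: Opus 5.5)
The plan is to manufacture a non-trivial non-negative path from the identity back to a contactomorphism that is \emph{smaller} than the identity. By \cite{Eliashberg00}, such a path forces the relation $\preccurlyeq$ to fail antisymmetry on $\widetilde{\mathcal{G}}$, equivalently it yields a contractible positive loop. The open book is used to localize the positivity near two sets: a neighbourhood of the binding $B$, and a neighbourhood of the Reeb saturation $\Sigma:=\bigcup_{t}\varphi^\alpha_t(\Lambda_0)$. The contactomorphism $f$ is used to cancel the second piece.

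First I reduce to an adapted form. I replace $\alpha$ by an ideal Giroux form adapted to $(B,\theta)$ whose Reeb field is transverse to the pages. By a Moser/Gray-type argument, displaceability of $\Lambda_0$ from its Reeb image should be stable under this change, up to conjugating $f$. The next step is to lift the negative Liouville flow on each page, cut off near the binding, to a contact isotopy $\psi_s$. For large $s$, $\psi_s$ squeezes the complement of a small tubular neighbourhood $N(B)$ into an arbitrarily small neighbourhood $U$ of the union of the page skeleta. Because the Reeb flow is transverse to the pages and is a symplectic monodromy flow, this union coincides with $\Sigma$.

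Second, I decompose the Reeb Hamiltonian $1=H_B+H_\Sigma$ into non-negative pieces, after conjugating by $\psi_s$:
\begin{itemize}
\item $H_B$ is supported in $N(B)$, where the flow is modelled on a rotation around the binding and generates a loop up to a positive correction;
\item $H_\Sigma$ is supported in $U$.
\end{itemize}
Conjugation by $\psi_s$ does not change the class in $\widetilde{\mathcal{G}}$ up to non-negative paths, since the relation $\preccurlyeq$ is conjugation invariant.

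Third, the displacement trick. Compactness of $\Lambda_0$ and of the circle of pages, together with $f(\Lambda_0)\cap\Sigma=\emptyset$, should let me shrink $U$ so that $f(U)\cap U=\emptyset$. This is done after conjugating $f$ by the Reeb flow and by the page-rotation, which replaces $\Lambda_0$ by the full saturation. Write $g_t$ for the flow of $H_\Sigma$. The path $t\mapsto g_t\circ f g_t^{-1}f^{-1}$ is then the flow of $H_\Sigma-H_\Sigma\circ f^{-1}$, suitably conformally rescaled, and it has disjointly supported pieces. This gives $g_1\preccurlyeq f g_1 f^{-1}$ and the reverse inequality $f g_1 f^{-1}\preccurlyeq g_1$ up to the commutator. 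Combining this with the positivity of the full Reeb path should produce $\mathrm{id}\preccurlyeq \tilde h$ and $\tilde h\preccurlyeq \mathrm{id}$ for some $\tilde h\neq \mathrm{id}$, contradicting antisymmetry.

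I expect the main obstacle to be the second and third steps together, namely controlling the homotopy class in $\widetilde{\mathcal{G}}$ rather than merely in $\mathrm{Cont}_0$. The binding part $H_B$ must close up to a loop that is contractible, or at least whose class is absorbed by the argument. The cancellation via $f$ must also happen at the level of the universal cover. To handle this, I would first try an Eliashberg–Kim–Polterovich-style squeezing estimate. Concretely, I would compare the positive path generated by $H_\Sigma$ with its $f$-conjugate through an explicit two-parameter family of contact Hamiltonians, so that the resulting positive loop is visibly null-homotopic.
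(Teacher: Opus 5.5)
The paper does not prove this statement; it quotes it from \cite[Theorem 1.10]{Hedicke24}. Your proposal has a genuine gap in its third step. The displacement trick there needs a neighbourhood $U$ of the whole saturation $\Sigma=\bigcup_t\varphi^\alpha_t(\Lambda_0)$ with $f(U)\cap U=\emptyset$. That is, you need $f$ to displace $\Sigma$ from itself. The hypothesis only gives $f(\Lambda_0)\cap\Sigma=\emptyset$, and this does not upgrade. Conjugating $f$ by the Reeb flow or by a page rotation produces different maps $\varphi^\alpha_t f\varphi^\alpha_{-t}$. Each of them moves only the slice $\varphi^\alpha_t(\Lambda_0)$ off $\Sigma$, and nothing lets you combine them into one contactomorphism that moves all of $\Sigma$ off itself.

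The stronger condition in fact fails in the case the theorem is used for. In Lemma \ref{lem_displace} the displacing map is supported in the complement of the skeleton $\Lambda_1\subset T=\Sigma$, so $f(\Sigma)\cap\Sigma\supset\Lambda_1\neq\emptyset$. Even if everything else worked, you would be proving a statement with a strictly stronger hypothesis, and it would not give Theorem \ref{thmmain}.

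The problem comes from your choice of squeezing. Contracting each page onto its own skeleton while leaving $N(B)$ alone concentrates the Reeb Hamiltonian on the union of \emph{all} skeleta, plus a binding piece. The flow this paper relies on (\cite[Proposition 3.5]{Courte18}, \cite[Lemma 2.7]{Hedicke24}) is different. It pushes every compact subset of $M\setminus\Lambda_0$, binding included, into an arbitrarily small neighbourhood of a \emph{single} other skeleton $\Lambda_1$. After localizing with such a flow, you only need to control a small neighbourhood $V$ of one skeleton against its own Reeb trajectories over a bounded time interval. Given $T>0$, compactness does supply this: $f(V)\cap\bigcup_{|t|\le T}\varphi^\alpha_t(V)=\emptyset$ for $V$ small enough. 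It also removes your binding piece $H_B$, whose contractibility in $\widetilde{\mathcal{G}}$ you leave open.

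Several other steps are also incorrect as stated.
\begin{enumerate}
\item In the first step, no Moser or Gray argument transfers the hypothesis to another contact form. Gray stability conjugates contact structures, not Reeb flows. Two forms defining the same structure generally have unrelated Reeb dynamics, so the saturation of $\Lambda_0$ can change completely. Keep the given $\alpha$: positivity of a path does not depend on the form, so its Reeb flow already serves as the positive path.
\item In the second step, the union of the page skeleta need not equal $\Sigma$. The skeleta are defined by the ideal Giroux form. The return map of the Reeb flow of $\alpha$ is a symplectomorphism of the page that need not preserve the skeleton. The equality is a special feature of the explicit form in Lemma \ref{lem_torus}.
\item The order-theoretic conclusions of the third step do not follow even if $f(U)\cap U=\emptyset$. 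The maps $g_1$ and $fg_1f^{-1}$ are conjugate. The generator of $g_t\,fg_t^{-1}f^{-1}$ is $\geq 0$ on $U$ and $\leq 0$ on $f(U)$. It therefore yields neither $g_1\preccurlyeq fg_1f^{-1}$ nor the reverse inequality. The final relations $\mathrm{id}\preccurlyeq\tilde h\preccurlyeq\mathrm{id}$ with $\tilde h\neq\mathrm{id}$ are asserted, not derived.
\end{enumerate}
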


\section{Proofs}

\subsection{An open book decomposition for the sphere}

It is well-known that $(S^3,\xi)$ can be constructed using the following abstract open book decomposition.

Let $A=[-2,2]\times S^1$ be an annulus with coordinates $(s,\phi)\in [-2,2]\times \R/(2\pi\Z)$ and standard orientation.
Consider a smooth map $F\colon A\rightarrow A$ such that $F(s,\phi)=(s,\phi+f(s))$, where $f\colon [-2,2]\rightarrow \R$ is a smooth map such that $f(s)=-2\pi s$ on $[0,1]$, $f\equiv 0$ near $s=-2$, $f\equiv -2\pi$ near $s=2$ chosen such that $F$ is isotopic to a negative Dehn twist.

Let
$$\mathrm{MT}(A,F):=[0,1]\times A/_{(0,F(p))\sim(1,p)}$$
be the mapping torus of the map $F$.
Note that $\partial \mathrm{MT}(A,F)\cong \partial A\times S^1$.
Then 
$$S^3\cong \mathrm{MT}(A,F)\cup_\psi\left(D^2\times S^1\sqcup D^2\times S^1\right),$$
where $\psi\colon \partial\mathrm{MT}(A,F)\rightarrow\partial\left(D^2\times S^1\sqcup D^2\times S^1\right)$ is a diffeomorphism, used to identify the boundaries, see \cite{Etnyre04, Geiges}.

A contact form adapted to this open book decomposition can be obtained as follows.
Let $\lambda=sd\phi$ be the standard Liouville form on $A$, viewing $A$ as a co-disk bundle of $S^1$.
Let $\mu \colon [0,1]\rightarrow [0,1]$ be a smooth function such that $\mu=1$ near $0$ and $\mu=0$ near $1$.
Then
$$\mu(\theta)\lambda+(1-\mu(\theta))F^{\ast}\lambda$$
defines a $1$-form on $[0,1]\times A$ that descends to a smooth $1$-form on $\mathrm{MT}(A,F)$.
Here $\theta\in [0,1]$ denotes the coordinate descending to the fibration \\
$\theta\colon \mathrm{MT}(A,F)\rightarrow S^1$ in the quotient.

A contact form on $\mathrm{MT}(A,F)$ is then given by
$$\alpha:=Cd\theta + \mu(\theta)\lambda+(1-\mu(\theta))F^{\ast}\lambda$$
for a sufficiently large constant $C>0$.
This contact form extends to a contact form on $S^3$ supporting $\xi$, see \cite{Etnyre04, Geiges}.

An ideal Giroux form $\beta$ as used in \cite[Theorem 1.10]{Hedicke24} can be obtained by replacing $\lambda=sd\phi$ with $\tilde{\lambda}=\tan\left(\frac{\pi s}{4}\right)d\phi$ in the above expression for $\alpha$.

\begin{lemma}\label{lem_torus}
The skeleton of the page $\theta^{-1}(0)$ with respect to $\beta$ is a non-loose Legendrian unknot $\Lambda_0$ with $\mathrm{tb}(\Lambda_0)=1$ and $\mathrm{rot}(\Lambda_0)=0$.
Its image under the Reeb flow of $\alpha$ is the pre-Lagrangian torus $T$ that is the zero-level set of the function $p\mapsto\alpha_p(X_p)$.
Here $X$ denotes the vector field inducing the Hopf fibration.
\end{lemma}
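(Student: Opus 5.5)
First I identify the skeleton. On the page $\theta^{-1}(0)$ the form $\beta$ restricts to $\tilde\lambda=\tan\left(\frac{\pi s}{4}\right)d\phi$, up to the pullback by $F$. Near $\theta=0$ we have $\mu=1$, so the restriction is exactly $\tilde\lambda$. Its Liouville vector field is $\frac{\tan(\pi s/4)}{(\pi/4)\sec^2(\pi s/4)}\partial_s$, which vanishes exactly at $s=0$. Hence the skeleton is the core circle $\Lambda_0=\{s=0\}\times S^1$. Since $\tilde\lambda$ vanishes along $\{s=0\}$ and the $d\theta$-term does not see a curve inside the page, $\Lambda_0$ is Legendrian for $\beta$. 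The same holds for $\alpha$, because $\lambda$ also vanishes at $s=0$ and $\ker\alpha=\ker\beta$ up to isotopy; I would make this identification explicit by interpolating $s$ and $\tan(\pi s/4)$ through Liouville forms with the same zero set.

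Next, the classical invariants. The contact framing of $\Lambda_0$ is the page framing, since $\xi$ along a curve of zeros of the page Liouville form is tangent to the page. For the core curve of an annulus page with monodromy a negative Dehn twist, the page framing exceeds the Seifert framing by $+1$. Here I would use the standard stabilization/open book computation: for a negative Dehn twist the linking of the core with its pushoff along the page is $+1$. Compare the positive Hopf band case, where the core has $\mathrm{tb}=-1$. This gives $\mathrm{tb}(\Lambda_0)=1$. The rotation number vanishes because the trivialization of $\xi$ over the page along the core is given by $\partial_s$, which extends over a Seifert disk formed from the page and the binding disk, with winding $0$. Alternatively, the orientation-reversal symmetry $\phi\mapsto-\phi$ of the construction forces $\mathrm{rot}=-\mathrm{rot}$. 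Non-looseness then follows from Eliashberg–Fraser: an unknot with $\mathrm{tb}=1>0$ in an overtwisted $S^3$ must be non-loose, since loose unknots satisfy the Bennequin-type bound $\mathrm{tb}\le -1$ from the tight complement. The invariants $(1,0)$ match the list $\mathrm{tb}=n$, $\mathrm{rot}=\pm(n-1)$ with $n=1$.

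For the torus, the Reeb field of $\alpha$ on $\mathrm{MT}(A,F)$ near $s\in[0,1]$ can be computed directly. There $F^*\lambda=s\,d\phi-2\pi s\,ds$ and $d(F^*\lambda)=ds\wedge d\phi$ (the $ds\wedge ds$ term vanishes), so $d\alpha=ds\wedge d\phi$ near $s=0$, and the Reeb field is $R=\frac1C\partial_\theta$, transverse to the page. Along $\Lambda_0$ the flow sweeps out the torus $\{s=0\}$ in the mapping torus. The circle closes up because $F$ is the identity on $\{s=0\}$, as $f(0)=0$. This torus is pre-Lagrangian: it is foliated by the Legendrian circles $\{s=0,\theta=\mathrm{const}\}$, and on it $\alpha$ restricts to $C\,d\theta$, which is closed.

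The main obstacle is identifying this torus with $h^{-1}(0)$, where $h=\alpha(X)$. I would identify the $S^1$-action: the Hopf vector field $X$ generating the symmetry should correspond, in the open book model, to a combination of $\partial_\theta$ and $\partial_\phi$ that preserves $\alpha$. On $\{s=0\}$ the form $\alpha$ evaluates on $X$ to $C\,d\theta(X)$ plus terms proportional to $s$. So I need to check that the Lutz-invariant $S^1$-action is tangent to the pages along the core, i.e.\ that $X$ is a multiple of $\partial_\phi$ there, which gives $h=0$ on $T$. Conversely, $h^{-1}(0)$ is a single torus, from the formula $\sin+\cos=0$ at $\tan(3\pi r/2)=-1$. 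This matches the $n=1$ torus in the Eliashberg–Fraser list, which is foliated by non-loose unknots with $\mathrm{tb}=1$, $\mathrm{rot}=0$. I would conclude by this identification, invoking the uniqueness up to contactomorphism of such unknots and the fact that the $S^1$-invariant characterization of the torus $\{h=0\}$ is intrinsic: it is the unique torus whose characteristic foliation consists of closed Legendrian Hopf fibres.
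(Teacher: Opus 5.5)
\textbf{The non-looseness step fails.} There is no Bennequin-type bound for loose knots. By definition the complement of a loose knot contains an overtwisted disk, so there is no ``tight complement'' to appeal to. In fact loose Legendrian unknots in $(S^3,\xi)$ realize every pair $(\mathrm{tb},\mathrm{rot})$ with $\mathrm{tb}+\mathrm{rot}$ odd; this is part of the Dymara/Eliashberg--Fraser picture. For instance, Legendrian connected sum with the boundary of an overtwisted disk ($\mathrm{tb}=0$) raises $\mathrm{tb}$ by one, and a second overtwisted disk can be kept in the complement. In particular there are loose unknots with $\mathrm{tb}=1$, $\mathrm{rot}=0$, exactly the invariants of $\Lambda_0$. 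The Bennequin-type inequalities that do hold in overtwisted manifolds constrain \emph{non-loose} knots. Eliashberg--Fraser's list tells you that non-loose unknots have $\mathrm{tb}\geq 1$, not the converse.

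So the classical invariants cannot certify non-looseness, and you need a direct proof that $S^3\setminus\Lambda_0$ is tight. The paper gets this from the open book. The Liouville field of the pages extends to a contact vector field (Courte, Prop.~3.5; Hedicke, Lemma~2.7). Its flow pushes any compact subset of $S^3\setminus\Lambda_0$ into an arbitrarily small neighbourhood of the skeleton $\Lambda_1$ of another page. A small standard neighbourhood of a Legendrian circle is contactomorphic to a neighbourhood of the zero section of $J^1S^1$, which is tight, so no overtwisted disk lies in the complement of $\Lambda_0$.

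Alternatively, you could rigorously identify $T$ and its Legendrian leaves with the Dymara--Eliashberg--Fraser torus and quote non-looseness of those leaves. Then that identification, not the invariants, carries the proof. Your closing appeal to uniqueness of such unknots up to contactomorphism already presupposes non-looseness.

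\textbf{The rest is essentially right, with some slips.}
\begin{itemize}
\item Your page-framing computation of $\mathrm{tb}=1$ is correct and independent of the classification. This is a genuine improvement: the paper instead deduces $\mathrm{tb}=1$ from $\mathrm{rot}=0$ via Eliashberg--Fraser, which again uses non-looseness.
\item The identity $d\alpha=ds\wedge d\phi$ holds only on $\{s=0\}$, not near it. In general $d\alpha$ contains the term $\mu'(\theta)\,d\theta\wedge(\lambda-F^{\ast}\lambda)$, which for $s\in[0,1]$ equals $2\pi s\,\mu'(\theta)\,d\theta\wedge ds$. Off $\{s=0\}$ the Reeb field therefore acquires a $\partial_\phi$-component proportional to $s\mu'(\theta)$. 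You only use $R=\frac{1}{C}\partial_\theta$ along $\{s=0\}$, so your conclusion survives.
\item The map $\phi\mapsto-\phi$ sends $\lambda$ to $-\lambda$ and reverses the orientation of the pages, so it is not a symmetry of the construction. Your trivialization argument for $\mathrm{rot}=0$ is the one to keep.
\item The identification $T=h^{-1}(0)$ is immediate once you take $X=\partial_\phi$, as the paper does. The page rotation commutes with $F$, and $\lambda(\partial_\phi)=F^{\ast}\lambda(\partial_\phi)=s$. Hence $h=\alpha(\partial_\phi)=s$ on the mapping torus, and $h^{-1}(0)=\{s=0\}$ there, with $h\neq 0$ near the binding.
\end{itemize}
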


\begin{proof}
For the page $W:=\theta^{-1}(0)$ the skeleton $\Lambda_0$ with respect to $\beta|_W$ is given by the circle $\{s=0,\theta=0\}\subset W\cong A$, since $\beta|_W=\tilde{\lambda}=\tan\left(\frac{\pi s}{4}\right)d\phi$.
Note that for $s\in [0,1]$ the contact form is given by
$$\alpha=C d\theta + sd\phi - 2\pi\mu(\theta)sds.$$
Hence the Reeb vector field in this region is point-wise collinear to
$$\partial_{\theta}-2\pi\mu'(\theta) s\partial_{\phi}.$$
In particular, the Reeb flow is periodic at $\Lambda_0$ and 
$$T:=\bigcup\limits_{t\in\R}\phi_t^R(\Lambda_0)=\{s=0\}$$
is a pre-Lagrangian torus in $(S^3,\xi)$.

As pointed out in \cite[Remark 1.2]{Hedicke24}, $\Lambda_0$ is non-loose.
This can be seen as follows.
Due to \cite[Proposition 3.5]{Courte18}, \cite[Lemma 2.7]{Hedicke24} there exists a contact flow that contracts every compact subset in the complement of $\Lambda_0$ to an arbitrarily small neighbourhood of another skeleton $\Lambda_1$.
The skeleton $\Lambda_1$ is a Legendrian knot and hence admits a standard neighbourhood contactomorphic to the $1$-jet bundle of the circle.
This neighbourhood does not contain any overtwisted discs and therefore there can not be any overtwisted discs contained in the complement of $\Lambda_0$.

Further $\Lambda_0$ is smoothly isotopic to a binding component of the open book which is a Hopf fibre.
Hence $\Lambda_0$ is a non-loose unknot.

Clearly, as $\xi|_{\Lambda_0}$ is tangent to $W$, $\Lambda_0$ (considered as an oriented knot with the orientation given by $\partial_{\phi}$) has vanishing rotation number and hence Thurston-Bennequin number $1$.

Note that the angular coordinate $\phi$ in the pages induces a contact $S^1$-action on $\mathrm{MT}(A,F)$ that extends smoothly to a contact $S^1$-action on $(S^3,\xi)$ which is given by the Hopf fibration $\pi\colon S^3\rightarrow S^2$.
As described in Section \ref{sec_loose} the $S^1$ invariant smooth function $h\colon S^3\rightarrow \R$ given by $h=\alpha(\partial_{\phi})$ was used by Lutz \cite{Lutz77} to classify $S^1$-invariant contact forms.

The torus $T$ is given by $h^{-1}(0)$ and coincides with the torus foliated by non-loose unknots with Thurston-Bennequin number $1$ considered in \cite{Dymara01, Eliashberg09, Vogel18}.
\end{proof}

\subsection{Displacing the skeleton}

Non-loose unknots have been classified in \cite{Dymara01, Eliashberg09, Etnyre13, Vogel18}.
In fact $\xi$ is the unique overtwisted contact structure on $S^3$ admitting non-loose Legendrian unknots.
The classification results imply that in order to displace $\Lambda_0$ from $T$ it is sufficient to find a non-loose Legendrian unknot with rotation number $0$ that is disjoint from $T$.

The following Lemma can be obtained by considering the isotopy described in \cite[Section2.5]{Vogel18}

\begin{lemma}\label{lem_displace}
There exists a contact isotopy $(f_t)_{t\in[0,1]}$ and the skeleton of a page $\Lambda_1$ different from $\Lambda_0$ such that $f_1(\Lambda_0)\cap T=\emptyset$ and $f_t(\Lambda_0)\cap \Lambda_1=\emptyset$ for all $t\in[0,1]$.
\end{lemma}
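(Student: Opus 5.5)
We plan to build the isotopy in the complement of a second skeleton, working in the coordinates of Section \ref{sec_loose}. First we fix $\Lambda_1$. Take the page $\theta^{-1}(1/2)$ (or any page other than $\theta^{-1}(0)$). By the same computation as in Lemma \ref{lem_torus}, its skeleton $\Lambda_1=\{s=0,\theta=1/2\}$ is a Legendrian Hopf fibre lying on the same torus $T=\{s=0\}$. So both $\Lambda_0$ and $\Lambda_1$ are leaves of the Legendrian foliation of $T$ by non-loose unknots with $\mathrm{tb}=1$, $\mathrm{rot}=0$. This is awkward, since we need $f_t(\Lambda_0)$ to avoid $\Lambda_1$ while $f_1(\Lambda_0)$ leaves $T$ altogether. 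The strategy is therefore to move $\Lambda_0$ off $T$ inside $S^3\setminus\Lambda_1$. For this we view $S^3\setminus(\text{two Hopf fibres})\cong(0,1)\times T^2$ with $\xi=\ker\alpha$ as in Section \ref{sec_loose}. We choose the two removed Hopf fibres so that the Legendrian foliation of $T=h^{-1}(0)$ appears as fronts $\{r=r_0\}$ with $\tan(3\pi r_0/2)=-1$, i.e.\ each leaf has a front that is a straight closed curve of slope determined by the $(1,1)$ Hopf direction in $T^2$.

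Next we find a target unknot. Inside $(0,1)\times T^2$, generic Legendrians are lifts of front curves in $T^2$. We want a front curve $\gamma\subset T^2$ whose Legendrian lift $K$ avoids $r=r_0$, i.e.\ the slope of $\gamma$ (which determines $r$ via the contact condition) never equals the slope of the $T$-leaves. The lift $K$ should also be smoothly a Hopf fibre, with $\mathrm{rot}(K)=0$, and be non-loose. The natural candidate is the move described in \cite[Section 2.5]{Vogel18}. Starting from the front of $\Lambda_0$, which is a straight line of the $T$-slope, we isotope the front through one component of the Hopf link: the front picks up a modification (a pair of cusps, or a wrapping around the removed fibre) so that its tangent slopes lie entirely in a range corresponding to $r\neq r_0$. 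The Reidemeister-type moves and the ``passing through the Hopf fibre'' move are realized by Legendrian isotopies, hence by contact isotopies $f_t$ through the Legendrian isotopy extension theorem.

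To keep the isotopy disjoint from $\Lambda_1$, we note that the isotopy of fronts can be supported in a small neighbourhood of a region of $T^2$ together with the relevant removed fibre. Choosing $\Lambda_1$ to be a leaf of $T$ whose front is a parallel straight line disjoint from the support (possible since the leaves' fronts foliate $T^2$ by parallel lines, and we may shrink the support in the transverse direction), we get $f_t(\Lambda_0)\cap\Lambda_1=\emptyset$. The Legendrian extension theorem gives a compactly supported contact isotopy of $S^3\setminus\Lambda_1$, and then $f_1(\Lambda_0)$ is the lift of $\gamma$, which is disjoint from $T$.

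The main obstacle is the middle step: verifying that the move through the Hopf fibre in \cite[Section 2.5]{Vogel18} genuinely produces a front all of whose slopes avoid the $T$-slope, with the knot still in the isotopy class of $\Lambda_0$ (so $\mathrm{tb}=1$, $\mathrm{rot}=0$). Otherwise the knot might only meet $T$ transversally in finitely many points. If the direct front picture fails, we would fall back on the classification \cite{Etnyre13, Vogel18}. There is a non-loose unknot with $\mathrm{tb}=1$, $\mathrm{rot}=0$ off $T$; for instance, a leaf of a nearby $S^1$-invariant torus can be stabilized and destabilized appropriately, or we can use the $r$-translation symmetry. Classification then gives a Legendrian isotopy to it, and a general position argument is used to keep it away from the one-dimensional $\Lambda_1$; this needs care, since codimension counts in a 3-manifold allow a 2-parameter family of curves to meet a curve.
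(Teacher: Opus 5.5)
Your proposal has two genuine gaps.

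\textbf{The key step is missing.} You never produce a Legendrian unknot disjoint from $T$ that is Legendrian isotopic to $\Lambda_0$, as you yourself acknowledge. Pushing the slope-$1$ front of $\Lambda_0$ directly off slope $1$ is not verified. The missing idea, which is what the paper uses, is to start from knots that are already off $T$. The two level sets of $h$ with $\tan(3\pi r/2)\in\{-2,-\tfrac12\}$ are disjoint from $T=\{\tan(3\pi r/2)=-1\}$. They are foliated by non-loose unknots $K'$ with $\mathrm{tb}(K')=2$ and $\mathrm{rot}(K')=1$. Stabilizing $K'$ inside a small Darboux ball keeps it off $T$. For the sign that gives $\mathrm{rot}=0$, the classification \cite[Theorem 1.12]{Etnyre13}, \cite[Section 2.5]{Vogel18} shows the result is still non-loose with $\mathrm{tb}=1$, hence Legendrian isotopic to $\Lambda_0$ up to orientation. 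The other stabilization has $\mathrm{rot}=2$ and is necessarily loose. Your fallback (``stabilize and destabilize appropriately'', ``$r$-translation symmetry'') does not pin this down. Destabilization is not available in general, and shifting $r$ is not a contactomorphism of $(S^3,\xi)$.

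\textbf{Your argument for $f_t(\Lambda_0)\cap\Lambda_1=\emptyset$ fails.} The front isotopy cannot be confined to a thin annulus around the front of $\Lambda_0$ that avoids a parallel $(1,1)$-line. When the knot is pushed through a component of the Hopf link, the fronts at nearby times sweep out an entire circle $\{x_2=c\}$ or $\{x_1=c\}$ of $T^2$, namely the one collapsing to the point where the knot crosses the fibre. Such a circle meets every $(1,1)$-line. General position does not rescue this either. The trace of a one-parameter family of knots is a surface, and it generically meets the curve $\Lambda_1$ in isolated points that are stable under perturbation.

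The right tool is the slope criterion you already use for $T$. The lift of a front point sits at the height $r$ determined by the slope of the front there. So $f_t(\Lambda_0)$ meets $T$ only at points of slope $1$, and it can meet $\Lambda_1$ only where its front crosses the front of $\Lambda_1$ with slope $1$. The fronts are allowed to cross the front of $\Lambda_1$. One checks on the explicit isotopy of \cite[Figure 6]{Vogel18} that slope-$1$ points never occur at such crossings, for a suitable leaf $\Lambda_1\subset T$. Only then does isotopy extension give a contact isotopy supported in the complement of $\Lambda_1$.
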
 

\begin{proof}

As discussed in Section \ref{sec_loose}, there exist two level sets of $h$ foliated by non-loose Legendrian unknots with Thurston-Bennequin invariant $2$ and rotation number $1$.
Pick such a Legendrian unknot $K'$.
Note that $K'$ does not intersect the torus $T$.

Let $K$ be a negative stabilization of $K'$.
The stabilization can be performed in a small Darboux neighbourhood around $K'$.
We can therefore assume that $K$ is disjoint from $T$.

Then by \cite[Theorem 1.12]{Etnyre13} or \cite[Section 2.5]{Vogel18}, $K$ is a non-loose unknot with $\mathrm{tb}(K)=1$ and $\mathrm{rot}(K)=0$ and in particular Legendrian isotopic to $\Lambda_0$ (up to orientation).

In terms of the front projection $(0,1)\rightarrow T^2$ in the complement of two Hopf links the isotopy can be described as in \cite[Figure 6]{Vogel18}.
As pictured in Figure 1 below, there exists a non-loose Legendrian unknot $\Lambda_1$ in $T$ that is disjoint from the isotopy and the skeleton of a page (red curve).

Here the first arrow indicates the negative stabilization of $K'$ which is moving closer to a Hopf fibre in the third picture.
The fourth picture shows the front projection of the isotopy after crossing the Hopf fibre.
After applying two Reidemeister moves in picture 5, we end up with $\Lambda_0$ whose front projection is a $(1,1)$ curve in $T^2$.
Note that the isotopy only intersects the pre-Lagrangian torus $T$ at the points where the front projection has slope $1$.
This never happens at the intersection points with the front projection of $\Lambda_1$.

By the isotopy extension theorem the Legendrian isotopy extends to a contact isotopy $(f_t)_{t\in[0,1]}$ that can be assumed to be compactly supported in the complement of $\Lambda_1$.
\end{proof}

\begin{center}
\begin{figure}\label{Fig_isotopy}
\includegraphics[scale=0.4]{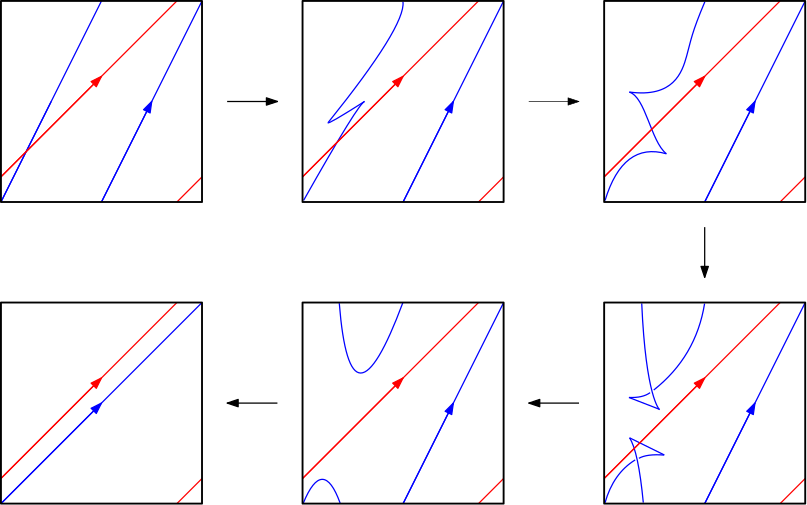}
\caption{The isotopy from the negative stabilization of $K'$ to $\Lambda_0$. The red curve is the front projection of a skeleton on $T$ disjoint from the isotopy.}
\end{figure}
\end{center}

\subsection{Proof of Theorem \ref{thmmain} and Theorem \ref{thm_translated}}

In Lemma \ref{lem_torus} we showed that there exist a supporting open book and a contact form for $(S^3,\xi)$ such that the skeleton of a page is a non-loose Legendrian unknot and its image under the Reeb flow is the standard pre-Lagrangian torus foliated by non-loose Legendrian unknots with Thurston-Bennequin invariant one.
Lemma \ref{lem_displace} shows that this skeleton can thus be displaced from its image under the Reeb flow.
The non-orderability of $(S^3,\xi)$ follows from \cite[Theorem 1.10]{Hedicke24}.
This proves Theorem \ref{thmmain}.

Lemma \ref{lem_displace} further shows that the skeleton can be displaced from its image under the Reeb flow in the complement of a second skeleton.
Then the proof of Theorem \ref{thm_translated} works completely analogous to the one of \cite{Cant22} and of \cite[Theorem 4.4]{Hedicke24}.
A contactomorphism without translated points can be constructed using the contact flow in \cite[Proposition 3.5]{Courte18}, \cite[Lemma 2.7]{Hedicke24} and the contactomorphism displacing the skeleton.

\bibliographystyle{siam}

\end{document}